\newif\ifspringer
    \renewcommand{\email}[1]{\emailname: #1} 
    \renewenvironment{proof}{\noindent{\itshape Proof.}}{\smartqed\qed}
    \newenvironment{acknowledgement}{\paragraph{Acknowledgement.}}{\par}
    \newcommand{\email}[1]{\texttt{#1}}
    \theoremstyle{plain}
        \newtheorem{theorem}{Theorem}
        \newtheorem{lemma}{Lemma}
    \theoremstyle{definition}
    \theoremstyle{remark}
\DeclareFontFamily{U}{mathx}{\hyphenchar\font45}
\DeclareFontShape{U}{mathx}{m}{n}{
      <5> <6> <7> <8> <9> <10>
      <10.95> <12> <14.4> <17.28> <20.74> <24.88>
      mathx10
      }{}
\DeclareSymbolFont{mathx}{U}{mathx}{m}{n}
\DeclareMathAccent{\widecheck}      {0}{mathx}{"71}
\newcommand{\bsx}{{\boldsymbol{x}}}
\newcommand{\bsy}{{\boldsymbol{y}}}
\newcommand{\bsz}{{\boldsymbol{z}}}
\newcommand{\bsDelta}{{\boldsymbol{\Delta}}}
\newcommand{\rd}{{\mathrm{d}}}
\newcommand{\bbE}{{\mathbb{E}}}
\newcommand{\bbN}{{\mathbb{N}}}
\newcommand{\bbV}{{\mathbb{V}}}
\DeclareSymbolFont{bbold}{U}{bbold}{m}{n}
\DeclareSymbolFontAlphabet{\mathbbold}{bbold}
\newcommand{\calH}{{\mathcal{H}}}
\newcommand{\calL}{{\mathcal{L}}}
\newcommand{\calV}{{\mathcal{V}}}
\newcommand{\calW}{{\mathcal{W}}}
\DeclareSymbolFont{bbold}{U}{bbold}{m}{n}
\DeclareSymbolFontAlphabet{\mathbbold}{bbold}
  \providecommand*{\toclevel@author}{999}
  \providecommand*{\toclevel@title}{0}
\newcommand{\GKSbell}{{\underaccent{\tilde}{\boldsymbol \ell}}} 
\newcommand{\GKSbe}{{\underaccent{\tilde}{\boldsymbol e}}}      
\begin{document}

\ifspringer

    \title*{Combining Sparse Grids, Multilevel MC and QMC
           for Elliptic PDEs with Random Coefficients}
    \titlerunning{Combining SG, MLMC, MLQMC for Elliptic PDEs with Random Coefficients}
    \author{Michael B.~Giles \and Frances Y.~Kuo \and Ian H.~Sloan}
    \institute{
     Michael B.~Giles (\Letter)
     \at Mathematical Institute, University of Oxford,
      Woodstock Road, Oxford, OX2 6GG, United Kingdom \\
     \email{mike.giles@maths.ox.ac.uk}
     \and
     Frances Y.~Kuo
     \at School of Mathematics and Statistics,
         University of New South Wales, Sydney NSW 2052, Australia \\
     \email{f.kuo@unsw.edu.au}
     \and
     Ian H.~Sloan
     \at School of Mathematics and Statistics,
         University of New South Wales, Sydney NSW 2052, Australia \\
     \email{i.sloan@unsw.edu.au}
    }
    \maketitle

    \index{Giles, Michael B.}
    \index{Kuo, Frances Y.}
    \index{Sloan, Ian H.}

    \abstract{Building on previous research which generalized multilevel Monte
    Carlo methods using either sparse grids or Quasi-Monte Carlo methods, this
    paper considers the combination of all these ideas applied to elliptic
    PDEs with finite-dimensional uncertainty in the coefficients. It shows the
    potential for the computational cost to achieve an $O(\varepsilon)$
    r.m.s.~accuracy to be $O(\varepsilon^{-r})$ with $r\!<\!2$, independently
    of the spatial dimension of the PDE.}

\else

    \title{Combining Sparse Grids, Multilevel MC and QMC
           for Elliptic PDEs with Random Coefficients}

   \author{Michael B.~Giles\footnote{
     Michael B.~Giles (\Letter):
     Mathematical Institute, University of Oxford,
     Woodstock Road, Oxford, OX2 6GG, United Kingdom,
     \email{mike.giles@maths.ox.ac.uk}
    }
    \and Frances Y.~Kuo\footnote{
     Frances Y.~Kuo:
     School of Mathematics and Statistics,
     University of New South Wales, Sydney NSW 2052, Australia,
     \email{f.kuo@unsw.edu.au}
    }
    \and Ian H.~Sloan\footnote{
     Ian H.~Sloan:
     School of Mathematics and Statistics,
     University of New South Wales, Sydney NSW 2052, Australia,
     \email{i.sloan@unsw.edu.au}
    }
    }

    \date{}

    \maketitle

    \begin{abstract}
    Building on previous research which generalized multilevel Monte
    Carlo methods using either sparse grids or Quasi-Monte Carlo methods, this
    paper considers the combination of all these ideas applied to elliptic
    PDEs with finite-dimensional uncertainty in the coefficients. It shows the
    potential for the computational cost to achieve an $O(\varepsilon)$
    r.m.s.~accuracy to be $O(\varepsilon^{-r})$ with $r\!<\!2$, independently
    of the spatial dimension of the PDE.
    \end{abstract}
\fi

\section{Introduction} \label{sec:intro}

There has been considerable research in recent years into the estimation
of the expected value of output functionals $P(u)$ arising from the
solution of elliptic PDEs of the form
\begin{equation} \label{eq:PDE}
 - \nabla \cdot \left(\rule{0in}{0.15in} a(\bsx,\bsy) \nabla u(\bsx,\bsy) \right) = f(\bsx),
\end{equation}
in the unit hypercube $[0,1]^d$, with homogeneous Dirichlet boundary
conditions. Here $\bsx$ represents the $d$-dimensional spatial coordinates
and the gradients are with respect to these, while $\bsy$ represents the
uncertainty. In this paper we will consider the simplest possible setting
in which we have finite $s$-dimensional uncertainty where
\[
a(\bsx,\bsy) = a_0(\bsx) + \sum_{j=1}^s y_j\, a_j(\bsx),
\]
with the $y_j$ independently and uniformly distributed on the interval
$[-\frac{1}{2}, \frac{1}{2}]$, with $0<a_{\min}\le a(\bsx,\bsy)\le
a_{\max} < \infty$ for all $\bsx$ and $\bsy$. This is the so-called
``uniform case''.

In this paper we consider several grid-based sampling methods, in all of
which the PDE \eqref{eq:PDE} is solved approximately by full or sparse
grid-based methods with respect to $\bsx$, for selected values of~$\bsy$.
We will consider both multilevel and multi-index methods
\cite{giles08,hnt15}, and compare Monte Carlo (MC) and Quasi-Monte Carlo
(QMC) methods for computing expected values with respect to $\bsy$.
We pay attention to the dependence of the computational cost on the
spatial dimension $d$, and we assume throughout this paper that the
stochastic dimension $s$ is fixed, though possibly large, and
we do not track the dependence of the cost on $s$.

As a general approach in a wide range of stochastic applications, the
multilevel Monte Carlo (MLMC) approach \cite{giles08} computes solutions
with different levels of accuracy, using the coarser solutions as a
control variate for finer solutions.  If the spatial dimension $d$ is not
too large, this can lead to an r.m.s.~accuracy of $\varepsilon$ being
achieved at a computational cost which is $O(\varepsilon^{-2})$, which is
much better than when using the standard MC method.

The earliest multilevel research on this problem was on the use of the
MLMC method for both this ``uniform case'' \cite{bsz11,hps13} and the
harder ``lognormal case'' \cite{cst13,cgst11,hps16,tsgu13}
in which $a(\bsx,\bsy)$ has a log-normal distribution with a
specified spatial covariance so that $\log a(\bsx,\bsy)$ has a
Karhunen-Lo\`eve expansion of the form $\log a(\bsx,\bsy) = \kappa_0(\bsx)
+ \sum_{j=1}^\infty y_j \sqrt{\lambda_j}\, \kappa_j(\bsx)$, where the
$y_j$ are independent with a standard normal distribution, and $\lambda_j$
and $\kappa_j(\bsx)$ are the non-decreasing eigenvalues and orthonormal
eigenfunctions of integral operator involving the covariance kernel. For
simplicity we will restrict our discussions to the uniform case in this
paper, but our results can be easily adapted for the lognormal case.

Subsequent research \cite{dkls16,gw09,hps17,ksssu15,kss15} combined the
multilevel approach with the use of QMC points, to form multilevel
Quasi-Monte Carlo (MLQMC). In the best cases, this can further reduce the
computational cost to $O(\varepsilon^{-r})$ for $r\!<\!2$.

The efficiency of both MLMC and MLQMC suffers when $d$ is large, and the
reason for this is easily understood.  Suppose the numerical
discretisation of the PDE has order of accuracy $p$, so that the error in
the output functional is $O(h^p)$, where $h$ is the grid spacing in each
coordinate direction.  To achieve an $O(\varepsilon)$ accuracy requires
$h\!=\!O(\varepsilon^{1/p})$, but if this is the grid spacing in each
direction then the total number of grid points is $O(\varepsilon^{-d/p})$.
Hence, the computational cost of performing just one calculation on the
finest level of resolution is $O(\varepsilon^{-d/p})$, and this then gives
a lower bound on the cost of the MLMC and MLQMC methods.

This curse of dimensionality is well understood, and in the case of
deterministic PDEs (i.e., without the uncertainty $\bsy$) it has been
addressed through the development of sparse grid methods \cite{bg04}. One
variant of this, the sparse combination technique, was the inspiration for
the development of the multi-index Monte Carlo (MIMC) method \cite{hnt15}.
The latter is a generalization of MLMC which in the context of
multi-dimensional PDEs uses a variety of regular grids, with differing
resolutions in each spatial direction.

In this paper we have two main contributions:
\begin{itemize}
\item we present alternative ways of combining MLMC with sparse
    grids, and discuss their relationship to the MIMC method;
\item we extend these approaches by considering the use of
    randomised QMC points, and derive the resulting computational cost
    if certain conditions are met.
\end{itemize}

The paper begins by reviewing sparse grid, MLMC/MIMC and randomised QMC
methods \cite{bg04,dks13,giles15}. Next we consider the combination of
MLMC with sparse grids, before adding randomised QMC to the combination.
In doing so, we present meta-theorems on the resulting computational cost,
based on key assumptions about the asymptotic behaviour of certain
quantities.

\section{Sparse Grid Methods} \label{sec:sg}

There are two main classes of sparse grid methods for deterministic PDEs:
sparse finite elements and the sparse combination technique \cite{bg04}.

\subsection{Sparse Finite Element Method} \label{sec:sg1}

The sparse finite element method for elliptic PDEs uses a standard
Galerkin finite element formulation but with a sparse finite element
basis.  One advantage of this approach is that most of the usual finite
element numerical analysis remains valid; the accuracy of the method can
be bounded by using bounds on the accuracy in interpolating the exact
solution using the sparse finite element basis functions.  The main
disadvantage of the approach compared to the sparse combination technique
(see the next subsection) is the difficulty of its implementation.

Following the very clear description of the method in \cite{bg99}, suppose
that we are interested in approximating the solution of an elliptic PDE in
$d$-dimensions.  For a non-negative multi-index $\GKSbell = (\ell_1,
\ell_2, \ldots, \ell_d)$, let $\calV_\GKSbell$ be the finite element space
spanned by the usual $d$-linear hat functions on a grid with spacing
$2^{-\ell_j}$ in dimension~$j$ for each $j=1,\ldots,d$. The difference
space $\calW_\GKSbell$ is defined by
\[
\calW_\GKSbell = \calV_\GKSbell \ominus \left( \bigoplus_{j=1}^d \calV_{\GKSbell-\GKSbe_j} \right)
\]
where $\GKSbe_j$ is the unit vector in direction $j$. Thus,
$\calW_\GKSbell$ has the minimal set of additional basis elements such
that
\[
\calV_\GKSbell = \calW_\GKSbell \oplus  \left( \bigoplus_{j=1}^d \calV_{\GKSbell-\GKSbe_j} \right).
\]

A sparse finite element space is then defined by $ \bigoplus_{\GKSbell \in
{\cal L}} \calW_\GKSbell, $ for some index set $\cal L$.  A simple and
near-optimal choice for a given level of accuracy is the set $\calL =
\{\GKSbell : \|\GKSbell\|_1 \!\leq\! L\}$ for some integer $L$; this is
discussed in \cite{bg99} (that paper also presents a slightly better
choice). Having defined the finite element space used for both test and
trial functions, the rest of the formulation is the standard Galerkin
finite element method. In the following, the space $\calH_1$ is the
standard Sobolev space with mixed first derivatives in $\bsx$.

\begin{theorem}[Sparse finite element method] \label{thm:sg1}
For fixed $\bsy$, if the PDE \eqref{eq:PDE} is solved using the sparse
finite element method with the index set specified by $\|\GKSbell\|_1
\!\leq\! L$, then the computational cost is $O(L^{d-1}\, 2^{L})$.
Moreover, the $\calH_1$ solution accuracy is $O(2^{-L})$ if the solution
$u$ has sufficient mixed regularity, and the accuracy of simple output
functionals $P$ (such as smoothly weighted averages of the solution) is
$O(2^{-2L})$. Hence, the cost to achieve a functional accuracy of
$\varepsilon$ is $O(\varepsilon^{-1/2} |\log \varepsilon|^{d-1})$.
\end{theorem}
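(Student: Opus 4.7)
The plan is to establish three ingredients in turn: (i) a count of the degrees of freedom in the sparse finite element space, which with a suitable solver controls the cost; (ii) an $\calH_1$ error estimate for the Galerkin projection under the mixed-regularity assumption; and (iii) an Aubin--Nitsche duality argument that doubles the convergence rate for smooth linear functionals. Balancing the resulting accuracy against $\varepsilon$ will give the final cost.

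First I would count degrees of freedom. Each hierarchical increment $\calW_\GKSbell$ has dimension proportional to $2^{\|\GKSbell\|_1}$, and the number of multi-indices $\GKSbell \in \mathbb{N}_0^d$ with $\|\GKSbell\|_1 = k$ is $\binom{k+d-1}{d-1} = O(k^{d-1})$. Summing $\sum_{k=0}^L k^{d-1}\,2^k$ is dominated by the top term and yields $O(L^{d-1}\,2^L)$. Under the standard assumption that the Galerkin linear system can be solved at a cost linear (or polylogarithmic) in the number of degrees of freedom -- for instance via a hierarchical-basis preconditioned iteration or a sparse-grid multigrid method, justified by the uniform ellipticity from $a_{\min}>0$ -- this also bounds the total solve cost by $O(L^{d-1}\,2^L)$.

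Next I would establish the $\calH_1$ accuracy. Classical hierarchical-basis analysis, as in \cite{bg04,bg99}, shows that if $u$ enjoys sufficient mixed regularity (bounded mixed second derivatives in each coordinate), then the $\calH_1$-norm of the hierarchical increment of $u$ in $\calW_\GKSbell$ decays like $2^{-\|\GKSbell\|_1}$ up to a harmless factor. Summing the increments outside the index set $\{\|\GKSbell\|_1\le L\}$ gives an $\calH_1$ best-approximation error of order $2^{-L}$, and Cea's lemma (again using uniform ellipticity) transfers this to the Galerkin error with a constant independent of $L$. For the functional estimate, I would introduce the adjoint problem with right-hand side given by the Riesz representer of $P$, apply the same sparse-grid approximation bound to the adjoint solution $z$, and then use Galerkin orthogonality to obtain
\[
|P(u) - P(u_L)| \;\lesssim\; \|u - u_L\|_{\calH_1}\,\|z - z_L\|_{\calH_1} \;=\; O(2^{-L})\cdot O(2^{-L}) \;=\; O(2^{-2L}),
\]
provided the weight defining $P$ is smooth enough for $z$ to have the same mixed regularity as $u$.

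The main obstacle I expect is the mixed-regularity estimate underlying (ii): proving that the hierarchical increments actually decay at the stated rate requires careful bounds on mixed partial derivatives of $u$ in terms of $f$ and $a$, and is where the hidden dependence on the spatial dimension $d$ enters through the constants and the polylogarithmic factor $L^{d-1}$. Once the three ingredients are in place, the cost-accuracy statement follows by solving $2^{-2L} = \varepsilon$, i.e. $L = \tfrac12 \log_2(1/\varepsilon)$, and substituting into $O(L^{d-1}\,2^L)$ to obtain $O(\varepsilon^{-1/2}\,|\log\varepsilon|^{d-1})$.
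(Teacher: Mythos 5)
Your proposal is correct and follows essentially the same route as the paper, which simply cites \cite{bg99,gsz92} for the degrees-of-freedom count and the $\calH_1$ accuracy under mixed regularity, and invokes adjoint-based (Aubin--Nitsche/Galerkin-orthogonality) error analysis \cite{gs02} for the $O(2^{-2L})$ superconvergence of functionals. You have merely expanded those citations into the standard arguments they contain, and your final balancing step matches the paper's conclusion.
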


\begin{proof}
The cost and $\calH_1$ solution accuracy are proved in \cite{bg99,gsz92}.
The super-convergence for output functionals is an immediate consequence
of adjoint-based error analysis \cite{gs02}.
\end{proof}

\subsection{Sparse Combination Method} \label{sec:sg2}

The sparse combination method combines the results of separate
calculations on simple tensor product grids with different resolutions in
each coordinate direction \cite{gsz92}. For a given output functional $P$
and multi-index $\GKSbell = (\ell_1,\ldots,\ell_d)$, let $P_\GKSbell$
denote the approximate output functional obtained on a grid with spacing
$2^{-\ell_j}$ in direction $j$ for each $j=1,\ldots,d$. For convenience,
we define $P_\GKSbell:=0$ if any of the indices in $\GKSbell$ is negative.

The backward difference in the $j^{\rm th}$ dimension is defined as $
\Delta_j P_\GKSbell := P_\GKSbell - P_{\GKSbell - \GKSbe_j}, $ and we
define the $d$-dimensional mixed first difference as
\[
\bsDelta P_\GKSbell := \left( \prod_{j=1}^d \Delta_j \right) P_\GKSbell.
\]
For an arbitrary multi-index $\GKSbell'$, it can be shown that
\begin{equation} \label{eq:summation}
P_{\GKSbell'} = \sum_{0 \;\leq\; \GKSbell \;\leq\; \GKSbell'} \bsDelta P_\GKSbell,
\end{equation}
where the multi-index inequality $\GKSbell \leq \GKSbell'$ is applied
element-wise (i.e.~$\ell_j\!\leq\! \ell'_j, \forall j$). Taking the limit
as $\GKSbell'\rightarrow \boldsymbol{\infty}$ (i.e.~$\ell'_j \rightarrow
\infty, \forall j$) gives
\begin{equation} \label{eq:full_combination}
  P = \sum_{\GKSbell \;\geq\; 0} \bsDelta P_\GKSbell.
\end{equation}
The sparse combination method truncates the summation to a finite index
set, with a simple and near-optimal choice again being $\|\GKSbell\|_1
\!\leq\! \ell$. This gives the approximation
\begin{equation}
 P_{\ell} := \sum_{\|\GKSbell\|_1 \leq \ell} \bsDelta P_\GKSbell,
\label{eq:sparse_combination}
\end{equation}
where we are slightly abusing notation by distinguishing between the
original $P_\GKSbell$ with a multi-index subscript (in bold type with a
tilde underneath), and the new $P_{\ell}$ on the left-hand side of this
equation with a scalar subscript (which is not in bold).

If we now define
\begin{equation}
S_\ell := \sum_{ \|\GKSbell\|_1 = \ell} P_\GKSbell
\label{eq:sparse_sum}
\end{equation}
and the backward difference $\Delta S_\ell := S_\ell - S_{\ell-1}$, then
it can be shown \cite{reisinger13} that
\[
P_\ell = \Delta^{d-1} S_\ell = \sum_{k=0}^{d-1} (-1)^k \binom{d\!-\!1}{k}\ S_{\ell-k}.
\]
Hence, the computation of
$P_\ell$ requires $O(\ell^{d-1})$ separate computations, each on a grid
with $O(2^\ell)$ grid points. This leads to the following theorem.

\begin{theorem}[Sparse combination method] \label{thm:sg2}
For fixed $\bsy$, if the PDE \eqref{eq:PDE} is solved using the sparse
combination method with the index set specified by $\|\GKSbell\|_1
\!\leq\! L$, then the computational cost is $O(L^{d-1}\, 2^{L})$.
Moreover, if the underlying PDE approximation has second order accuracy
and the solution $u$ has sufficient mixed regularity, then $\bsDelta
P_\GKSbell$ has magnitude $O(2^{-2\|\GKSbell\|_1})$ so the error in $P_L$
is $O(L^{d-1} \, 2^{-2L})$. Hence, the cost to achieve a functional
accuracy of $\varepsilon$ is $O(\varepsilon^{-1/2} |\log
\varepsilon|^{3\,(d-1)/2})$.
\end{theorem}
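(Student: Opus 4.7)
The plan is to verify the three quantitative claims (cost per level, decay of the mixed difference, truncation error) and then combine them to obtain the cost-to-accuracy estimate. The strategy is essentially Bungartz--Griebel, specialised to output functionals.

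\emph{Cost.} From the identity $P_L = \Delta^{d-1} S_L = \sum_{k=0}^{d-1}(-1)^k\binom{d-1}{k} S_{L-k}$ together with \eqref{eq:sparse_sum}, computing $P_L$ reduces to evaluating $P_\GKSbell$ on every tensor-product grid with $\GKSbell \ge 0$ and $\|\GKSbell\|_1 \in \{L-d+1,\ldots,L\}$. On level $\ell$ the number of such indices is $\binom{\ell+d-1}{d-1} = O(\ell^{d-1})$, and each individual solve, on a grid with $\prod_j 2^{\ell_j} = 2^\ell$ nodes, costs $O(2^\ell)$ under a linear-complexity elliptic solver. Summing over the $d$-wide band of levels and absorbing lower-order terms gives $O(L^{d-1} 2^L)$.

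\emph{Mixed-difference decay.} Under the mixed-regularity hypothesis and second-order accuracy of the underlying discretisation, the pointwise error $e_\GKSbell := P - P_\GKSbell$ admits a Bungartz-type asymptotic expansion
\begin{equation*}
e_\GKSbell \;=\; \sum_{\emptyset \neq J \subseteq \{1,\ldots,d\}} c_J(\bsy)\,\prod_{j\in J} 2^{-2\ell_j} \;+\; \text{higher-order terms},
\end{equation*}
with coefficients $c_J$ independent of $\GKSbell$ for fixed $\bsy$ (this is the standard adjoint-based output-functional analogue of the expansion in \cite{bg04}). Since $\Delta_j$ annihilates any summand that does not depend on $\ell_j$, the full mixed difference $\bsDelta = \prod_{j=1}^d \Delta_j$ kills every term except the one with $J = \{1,\ldots,d\}$, on which it multiplies by the constant $(1-2^{-2})^d$. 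Hence $|\bsDelta P_\GKSbell| = |\bsDelta e_\GKSbell| = O(2^{-2\|\GKSbell\|_1})$.

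\emph{Truncation error and cost-to-accuracy.} Subtracting \eqref{eq:sparse_combination} from \eqref{eq:full_combination} gives $P - P_L = \sum_{\|\GKSbell\|_1 > L}\bsDelta P_\GKSbell$, and the decay bound yields
\begin{equation*}
|P - P_L| \;\le\; C\sum_{\ell>L}\binom{\ell+d-1}{d-1}\,2^{-2\ell} \;\le\; C'\, L^{d-1}\, 2^{-2L},
\end{equation*}
the last inequality because successive terms decrease by a geometric factor $\tfrac14$. Setting the right-hand side equal to $\varepsilon$ gives $L = O(|\log\varepsilon|)$ and $2^L \asymp \varepsilon^{-1/2} L^{(d-1)/2}$, and substituting into the cost $O(L^{d-1} 2^L)$ delivers $O(\varepsilon^{-1/2}|\log\varepsilon|^{3(d-1)/2})$.

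The main obstacle is the mixed-difference bound: the near-optimality of the sparse combination method rests entirely on the Bungartz-style error expansion above, which demands genuine mixed regularity of $u(\cdot,\bsy)$ (finite norms involving mixed second-order spatial derivatives) together with an adjoint-based argument to transfer the $\calH_1$ expansion to output functionals. Once that structural statement is in hand, the remaining steps reduce to combinatorial counting of grid points and one inversion of a log-linear relation.
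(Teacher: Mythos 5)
Your argument is correct and is essentially the proof the paper has in mind: the paper's own proof simply delegates the cost and accuracy bounds to \cite{reisinger13}, whose analysis rests on exactly the error-expansion/annihilation argument and the index-counting that you carry out explicitly, followed by the same inversion of $L^{d-1}2^{-2L}=O(\varepsilon)$. The only blemish is the constant in the mixed-difference step: the backward difference gives $\Delta_j\,2^{-2\ell_j} = \bigl(1-2^{2}\bigr)2^{-2\ell_j} = -3\cdot 2^{-2\ell_j}$, so the surviving term is multiplied by $(-3)^d$ rather than $(1-2^{-2})^d$, which of course does not affect the $O(2^{-2\|\GKSbell\|_1})$ order or anything downstream.
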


\begin{proof}
For the results on the cost and accuracy see \cite{reisinger13}. The cost
result is an immediate consequence.
\end{proof}

\section{MLMC and MIMC}

\subsection{MLMC}

\label{sec:MLMC}

The multilevel Monte Carlo (MLMC) idea is very simple. As explained in a
recent review article \cite{giles15}, given a sequence $P_\ell,\, \ell
\!=\! 0, 1, \ldots$ of approximations of an output functional $P$, with
increasing accuracy and cost as $\ell$ increases, and defining
$P_{-1}:=0$, we have the simple identity
\[
\bbE[P] = \sum_{\ell=0}^\infty \bbE[ \Delta P_\ell], ~~~
\Delta P_\ell := P_\ell - P_{\ell-1}.
\]
The summation can be truncated to
\begin{equation}
\bbE[P]\ \approx\ \bbE[P_L]\ =\ \sum_{\ell=0}^L \bbE[\Delta P_\ell],
\label{eq:telescoping}
\end{equation}
with $L$ chosen to be sufficiently large to ensure that the weak error
$\bbE[P\!-\!P_L]$ is acceptably small. Each of the expectations on the
r.h.s.~of \eqref{eq:telescoping} can be estimated independently using
$N_\ell$ independent samples so that the MLMC estimator is
\begin{equation} \label{eq:MLMC}
Y = \sum_{\ell=0}^L Y_\ell, ~~~~
Y_\ell = \frac{1}{N_\ell} \sum_{i=1}^{N_\ell} \Delta P_\ell^{(i)}.
\end{equation}
The computational savings comes from the fact that on the finer levels
$\Delta P_\ell$ is smaller and has a smaller variance, and therefore fewer
samples $N_\ell$ are required to accurately estimate its expected value.

The optimal value for $N_\ell$ on level $\ell = 0, 1, \ldots, L$ can be
estimated by approximately minimising the cost for a given overall
variance.  This results in the following theorem which is a slight
generalization of the original in \cite{giles08}.

\begin{theorem}[MLMC]
\label{thm:MLMC} Let $P$ denote an output functional, and let $P_\ell$
denote the corresponding level $\ell$ numerical approximation. Suppose
there exist independent estimators $Y_\ell$ of $\bbE[\Delta P_\ell]$ based
on $N_\ell$ Monte Carlo samples and positive constants $\alpha, \beta,
\gamma, c_1, c_2, c_3$, with
$\alpha\!\geq\!\frac{1}{2}\min(\beta,\gamma)$, such that
\begin{itemize}
\item[i)] ~ $\displaystyle \left|\rule{0in}{0.13in} \bbE[P_\ell \!-\!
    P] \right| \longrightarrow 0 ~~ \mbox{as} ~~ \ell \longrightarrow
    \infty $, \vspace{0.05in}
\item[ii)] ~ $\displaystyle \left|\bbE[\Delta P_\ell] \right|\ \leq\
    c_1\, 2^{-\alpha\, \ell} $ \vspace{0.05in},
\item[iii)] ~ $\displaystyle \bbE[Y_\ell] = \bbE[\Delta P_\ell] $,
    \vspace{0.05in}
\item[iv)] ~ $\displaystyle \bbV[Y_\ell]\ \leq\ c_2\,
    N_\ell^{-1}\,2^{-\beta\, \ell} $, \vspace{0.05in}
\item[v)] ~ $\displaystyle {\rm cost}(Y_\ell)\ \leq\ c_3\,
    N_\ell\,2^{\gamma\, \ell} $.
\end{itemize}
Then there exists a positive constant $c_4$ such that for any $\varepsilon
\!<\! e^{-1}$ there are values $L$ and $N_\ell$ for which the MLMC
estimator
\eqref{eq:MLMC} achieves the 
mean-square-error bound $
\bbE [ (Y - \bbE[P])^2] < \varepsilon^2$ with the computational cost bound
\[
{\rm cost}(Y)
\leq \left\{\begin{array}{ll}
c_4\, \varepsilon^{-2}              ,    & ~~ \beta>\gamma, \\[0.1in]
c_4\, \varepsilon^{-2} |\log \varepsilon|^2,    & ~~ \beta=\gamma, \\[0.1in]
c_4\, \varepsilon^{-2-(\gamma\!-\!\beta)/\alpha}, & ~~ \beta<\gamma.
\end{array}\right.
\]
\end{theorem}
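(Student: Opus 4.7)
The plan is to follow the classical MLMC cost analysis, decomposing the mean-square error into a squared bias and a variance contribution and then balancing the two at level $\varepsilon^2/2$ each. Concretely, since $\bbE[Y]=\bbE[P_L]$ and the $Y_\ell$ are independent,
\[
\bbE[(Y-\bbE[P])^2] \;=\; (\bbE[P_L]-\bbE[P])^2 + \sum_{\ell=0}^L \bbV[Y_\ell].
\]
First I would control the bias. Assumption (i) lets me write $\bbE[P]-\bbE[P_L]=\sum_{\ell>L}\bbE[\Delta P_\ell]$, and assumption (ii) gives the geometric bound $|\bbE[P_L]-\bbE[P]|\le c_1\,2^{-\alpha L}/(1-2^{-\alpha})$. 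Choosing $L = \lceil \alpha^{-1}\log_2(\sqrt{2}\,c_1/((1-2^{-\alpha})\varepsilon))\rceil$ ensures the squared bias is at most $\varepsilon^2/2$, and crucially $L = \alpha^{-1}\log_2(\varepsilon^{-1}) + O(1)$.

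Next I would bound the variance and optimise the sample sizes. By (iii)–(iv), $\bbV[Y]\le c_2\sum_{\ell=0}^L N_\ell^{-1}\,2^{-\beta\ell}$, and by (v) the total cost is $c_3\sum_{\ell=0}^L N_\ell\,2^{\gamma\ell}$. Treating $N_\ell$ as continuous and applying Lagrange multipliers to minimise cost subject to $\sum N_\ell^{-1}\,2^{-\beta\ell}$ being fixed yields $N_\ell\propto 2^{-(\beta+\gamma)\ell/2}$; after equating the variance contribution to $\varepsilon^2/(2c_2)$, the canonical choice is
\[
N_\ell \;=\; \Bigl\lceil 2c_2\,\varepsilon^{-2}\,2^{-(\beta+\gamma)\ell/2}\,\Bigl(\sum_{k=0}^L 2^{(\gamma-\beta)k/2}\Bigr)\Bigr\rceil.
\]
Substituting back, the cost becomes a constant multiple of $\varepsilon^{-2}\bigl(\sum_{\ell=0}^L 2^{(\gamma-\beta)\ell/2}\bigr)^2$ plus a ceiling correction of order $\sum_{\ell=0}^L 2^{\gamma\ell}$.

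Then I would evaluate the geometric sum in the three regimes. If $\beta>\gamma$, the sum is uniformly bounded, giving cost $O(\varepsilon^{-2})$. If $\beta=\gamma$, the sum is $O(L)=O(|\log\varepsilon|)$, producing the $|\log\varepsilon|^2$ factor. If $\beta<\gamma$, the sum is dominated by its last term, $O(2^{(\gamma-\beta)L/2})$, and plugging in $L\asymp\alpha^{-1}\log_2\varepsilon^{-1}$ produces the cost $O(\varepsilon^{-2-(\gamma-\beta)/\alpha})$.

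The main obstacle, and the reason for the hypothesis $\alpha\ge\tfrac12\min(\beta,\gamma)$, is handling the ceiling correction: on those levels where the optimal $N_\ell$ is less than one, it must be raised to one, and the resulting contribution $c_3\sum_\ell 2^{\gamma\ell}\lesssim 2^{\gamma L}\asymp \varepsilon^{-\gamma/\alpha}$ must not exceed the leading cost. A case analysis shows that $\alpha\ge\tfrac12\min(\beta,\gamma)$ is exactly what is needed to absorb this correction in each of the three regimes, whence the stated cost bounds hold with a suitable constant $c_4$.
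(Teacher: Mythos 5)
Your proposal is correct and follows essentially the same route the paper indicates: a bias/variance split, a constrained (Lagrange-multiplier) optimisation of the $N_\ell$ treated as reals, and a rounding-up correction bounded by $O(\varepsilon^{-\gamma/\alpha})$, which is exactly where the hypothesis $\alpha\ge\tfrac12\min(\beta,\gamma)$ is used. The paper only sketches this argument and defers the details to the original MLMC reference, whereas you have filled them in accurately.
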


The proof of this theorem uses a constrained optimisation approach to
optimise the number of samples $N_\ell$ on each level.  This treats the
$N_\ell$ as real variables, and then the optimal value is rounded up to
the nearest integer.  This rounding up improves the variance slightly, so
that we still achieve our target mean-square-error accuracy, but it also
increases the cost by at most one sample per level.  This additional cost
is dominated by the cost of one sample on the finest level, which is
$O(\varepsilon^{-\gamma/\alpha})$ since the weak convergence condition
requires that the finest level satisfies $2^{-\alpha L} = O(\varepsilon)$.
The condition in the theorem that $\alpha\geq
\frac{1}{2}\min(\beta,\gamma)$ ensures that this additional cost is
negligible compared to the main cost.

When applied to our model elliptic PDE, if one uses a tensor product grid
with spacing $2^{-\ell}$ in each direction, then if the numerical
discretisation has second order accuracy it gives $\alpha\!=\!2$ and
$\beta\!=\!4$, while with an ideal multigrid solver the cost is at best
proportional to the number of grid points which is $2^{d\ell}$ so
$\gamma\!=\!d$. Hence, the cost is $O(\varepsilon^{-r})$ where
$r=\max(2,d/2)$, except for $d\!=\!4$ for which $\beta\!=\!\gamma$ and
hence there is an additional $|\log \varepsilon|^2$ factor.  It is the
dependence on $d$ which will be addressed by incorporating sparse grid
methods.

\subsection{MIMC}

The multi-index Monte Carlo (MIMC) method  \cite{hnt15} is inspired by the
sparse combination technique. Starting from \eqref{eq:full_combination},
if each of the $\Delta P_\GKSbell$ is now a random variable due to the
random coefficients in the PDE, we can take expectations of each side and
truncate the sum to give
\begin{equation}
\bbE[P] \,\approx\,
\bbE[P_{L}] \,=\, \sum_{\|\GKSbell\|_1 \leq L} \bbE[\bsDelta P_\GKSbell].
\label{eq:sparse_combination_2}
\end{equation}
This is now very similar to the telescoping sum \eqref{eq:telescoping} in
MLMC, with the difference that the levels are now labelled by
multi-indices, so allowing different discretizations in different
directions.  We can independently estimate each of the expectations on the
r.h.s.~of \eqref{eq:sparse_combination_2} using a number of independent
samples $N_\GKSbell$ so that the MIMC estimator is
\begin{equation} \label{eq:MIMC}
Y = \sum_{\|\GKSbell\|_1 \leq L} Y_\GKSbell,~~~~
Y_\GKSbell = \frac{1}{N_\GKSbell} \sum_{i=1}^{N_\GKSbell} \Delta P_\GKSbell^{(i)}.
\end{equation}
The numbers $N_\GKSbell$ are optimised to minimise the cost of achieving a
certain desired variance or mean-square-error.

The original paper \cite{hnt15} considers much more general circumstances:
the different indices in $\GKSbell$ are not limited to the spatial
discretizations in $\bsx$ but can also involve quantities such as the
number of particles in a system, or the number of terms in a
Karhunen--Lo\`eve expansion (arising from dimension truncation in the
stochastic variables $\bsy$). Here in the isotropic PDE case, in which the
behaviour in each space dimension is similar, this leads to the following
theorem.

\begin{theorem}[MIMC]
\label{thm:MIMC} Let $P$ denote an output functional, and for each
multi-index $\GKSbell$ let $P_\GKSbell$ denote the approximate output
functional indexed by $\GKSbell$. Suppose for each multi-index $\GKSbell$
there exist independent estimators $Y_\GKSbell$ of $\bbE[\bsDelta
P_\GKSbell]$ based on $N_\GKSbell$ Monte Carlo samples and positive
constants $\alpha, \beta, \gamma, c_1, c_2, c_3$, with
$\alpha\!\geq\!\frac{1}{2} \beta$, such that
\begin{itemize}
\item[i)] ~ $\displaystyle \left|\rule{0in}{0.13in} \bbE[P_\GKSbell
    \!-\! P] \right| \longrightarrow 0 ~~ \mbox{as} ~~ \GKSbell
    \longrightarrow \boldsymbol{\infty} $ ($\ell_j\to\infty\;, \forall
    j$), \vspace{0.05in}
\item[ii)] ~ $\displaystyle \left|\rule{0in}{0.13in} \bbE[\bsDelta
    P_\GKSbell] \right| \ \leq\ c_1\, 2^{-\alpha \|\GKSbell\|_1} $,
    \vspace{0.05in}
\item[iii)] ~ $\displaystyle \bbE[Y_\GKSbell]\ = \bbE[\bsDelta
    P_\GKSbell] $, \vspace{0.05in}
\item[iv)] ~ $\displaystyle\bbV[Y_\GKSbell]\ \leq\ c_2\,N_\ell^{-1}\,
    2^{-\beta\|\GKSbell\|_1} $, \vspace{0.05in}
\item[v)] ~ $\displaystyle {\rm cost}(Y_\GKSbell) \leq\ c_3\,N_\ell\,
    2^{\gamma\|\GKSbell\|_1} $.
\end{itemize}
Then there exists a positive constant $c_4$ such that for any $\varepsilon
\!<\! e^{-1}$ there are values $L$ and $N_\GKSbell$ for which the MIMC
estimator
\eqref{eq:MIMC} achieves the 
mean-square-error bound $
\bbE [ (Y - \bbE[P])^2] < \varepsilon^2$ with the computational cost bound
\[
{\rm cost}(Y) \leq\ \left\{\begin{array}{ll}
c_4\, \varepsilon^{-2}\,                ,    & ~~ \beta>\gamma, \\[0.1in]
c_4\, \varepsilon^{-2}\, |\log \varepsilon|^{e_1},    & ~~ \beta=\gamma, \\[0.1in]
c_4\, \varepsilon^{-2-(\gamma-\beta)/\alpha}\, |\log \varepsilon|^{e_2}, & ~~ \beta<\gamma,
\end{array}\right.
\]
where
\[
\begin{array}{lll}
e_1 = 2d, &
e_2 = (d\!-\!1)\,(2\!+\!(\gamma\!-\!\beta)/\alpha), &
\mbox{if} ~~ \alpha \!>\! \textstyle\frac{1}{2}\beta, \\[0.05in]
e_1 = \max(2d, 3(d\!-\!1)), &
e_2 = (d\!-\!1)\,(1\!+\!\gamma/\alpha), &
\mbox{if} ~~ \alpha \!=\! \textstyle\frac{1}{2}\beta.
\end{array}
\]
\end{theorem}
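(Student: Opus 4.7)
The plan is to adapt the classical MLMC cost analysis (Theorem~\ref{thm:MLMC}) to multi-indices, the crucial new bookkeeping device being the combinatorial count $\#\{\GKSbell\geq\bszero : \|\GKSbell\|_1 = \ell\} = \binom{\ell+d-1}{d-1} = O(\ell^{d-1})$, which now replaces the single-term count per level appearing in MLMC. Every occurrence of a geometric sum over $\ell$ in the MLMC analysis will pick up this $\ell^{d-1}$ weight.

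First I would decompose the mean square error as $\bbE[(Y-\bbE[P])^2] = (\bbE[Y] - \bbE[P])^2 + \bbV[Y]$. For the bias, the telescoping identity (absolutely convergent thanks to (i) and (ii)) gives $\bbE[P] - \bbE[P_L] = \sum_{\|\GKSbell\|_1 > L}\bbE[\bsDelta P_\GKSbell]$; applying (ii) and grouping by $\ell = \|\GKSbell\|_1$ yields the bound $c_1 \sum_{\ell>L} O(\ell^{d-1})\, 2^{-\alpha\ell} = O(L^{d-1}\, 2^{-\alpha L})$. Choosing the smallest integer $L$ so that this bound does not exceed $\varepsilon/\sqrt{2}$ gives $L = \alpha^{-1}\log_2 \varepsilon^{-1} + O(\log\log\varepsilon^{-1})$, equivalently $2^{\alpha L} = \Theta(\varepsilon^{-1}\, L^{d-1})$, which I will later invoke to convert $L$-powers into $\varepsilon$-powers.

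Second, with $L$ fixed, I would allocate $N_\GKSbell$ by minimising the cost functional $\sum_{\|\GKSbell\|_1\leq L} c_3 N_\GKSbell\, 2^{\gamma\|\GKSbell\|_1}$ subject to the variance constraint $\sum_{\|\GKSbell\|_1 \leq L} c_2\, N_\GKSbell^{-1}\, 2^{-\beta\|\GKSbell\|_1} \leq \varepsilon^2/2$. Treating $N_\GKSbell$ as continuous, a standard Lagrange-multiplier calculation yields $N_\GKSbell \propto 2^{-(\beta+\gamma)\|\GKSbell\|_1/2}$ with the constant of proportionality forced by the variance constraint, giving a total cost of order $\varepsilon^{-2}\, S_L^2$ where $S_L := \sum_{\|\GKSbell\|_1\leq L} 2^{(\gamma-\beta)\|\GKSbell\|_1/2} = \sum_{\ell=0}^L O(\ell^{d-1})\, 2^{(\gamma-\beta)\ell/2}$. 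Three regimes separate in the natural way: if $\beta > \gamma$ the geometric series converges and cost is $O(\varepsilon^{-2})$; if $\beta = \gamma$, then $S_L = O(L^d)$ so $S_L^2$ contributes $|\log\varepsilon|^{2d}$; and if $\beta < \gamma$ the top shell dominates, $S_L = O(L^{d-1}\, 2^{(\gamma-\beta)L/2})$, and substituting $2^{\alpha L}\sim\varepsilon^{-1} L^{d-1}$ gives $S_L^2 = O(\varepsilon^{-(\gamma-\beta)/\alpha}\, L^{(d-1)(2+(\gamma-\beta)/\alpha)})$, producing the quoted exponent $e_2 = (d-1)(2+(\gamma-\beta)/\alpha)$.

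Third, I would absorb the integer rounding $N_\GKSbell \to \lceil N_\GKSbell\rceil$. This adds at most one extra sample per active multi-index, hence at most $c_3 \sum_{\|\GKSbell\|_1 \leq L} 2^{\gamma\|\GKSbell\|_1} = O(L^{d-1}\, 2^{\gamma L}) = O(\varepsilon^{-\gamma/\alpha}\, |\log\varepsilon|^{(d-1)(1+\gamma/\alpha)})$ in extra cost. Under $\alpha > \beta/2$ (strict) this is dominated by the main Lagrange-optimal term in all three regimes; only at the boundary $\alpha = \beta/2$ does the rounding term match or exceed the main term, yielding the modified exponents $e_1 = \max(2d, 3(d-1))$ and $e_2 = (d-1)(1+\gamma/\alpha)$ of the special case. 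The main obstacle I expect is precisely this bookkeeping at $\alpha = \beta/2$: one must compare the Lagrange-optimal cost (which depends on both $\beta$ and $\gamma$) against the rounding cost (which sees only $\gamma$) shell by shell, verifying which log power dominates in each subregime. The rest is mechanical once the factor $\binom{\ell+d-1}{d-1}$ has been inserted in place of the trivial count of MLMC.
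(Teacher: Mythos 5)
Your proposal is correct and follows essentially the same route as the paper, which itself only cites \cite[Theorem 2.2]{hnt15} for the proof but sketches exactly your argument in the surrounding discussion (the $O(L^{d-1})$ shell count, the constraint $L^{d-1}2^{-\alpha L}=O(\varepsilon)$, the Lagrange-optimal $N_\GKSbell$, and the rounding cost $O(\varepsilon^{-\gamma/\alpha}|\log\varepsilon|^{(d-1)(1+\gamma/\alpha)})$ that takes over only at $\alpha=\tfrac12\beta$). Your exponent bookkeeping, including the observation that at $\alpha=\tfrac12\beta$ and $\beta<\gamma$ the rounding term matches the main term in the $\varepsilon$-power but exceeds it in the log-power, correctly reproduces $e_1$ and $e_2$ in both cases.
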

\begin{proof}
This is a particular case of the more general analysis in
\cite[Theorem 2.2]{hnt15}.
\end{proof}

In the case of MIMC, there are $O(L^{d-1})$ multi-indices on the finest
level on which $\|\GKSbell\|_1 = L$.  Hence the finest level is determined
by the constraint $L^{d-1} 2^{-\alpha L} = O(\varepsilon)$, and the
associated cost is $O(\varepsilon^{-\gamma/\alpha} |\log
\varepsilon|^{(d-1)(1+\gamma/\alpha)})$.  Given the assumption that
$\alpha \geq \frac{1}{2}\beta$, this is not asymptotically bigger than the
main cost except when $\alpha\!=\!\frac{1}{2}\beta$, in which case it is
responsible for the $e_2$ and the $3(d\!-\!1)$ component in the maximum in
$e_1$.

When applied to our model elliptic PDE, if one uses a tensor product grid
with spacing $2^{-\ell_j}$ in the $j^{\rm th}$ direction, and a numerical
discretisation with second order accuracy, then we are likely to get
$\alpha\!=\!2$ and $\beta\!=\!4$ if the solution has sufficient mixed
regularity \cite{reisinger13}. (Note that this is a much stronger
statement than the $\alpha\!=\!2$, $\beta\!=\!4$ in the previous section;
taking the case with $d\!=\!3$ as an example, with grid spacing $h_1, h_2,
h_3$ in the three dimensions, Section \ref{sec:MLMC} requires only that
$\Delta P_\ell = O(h^2)$ when all three spacings are equal to $h$, whereas
in this section we require the product form $\bsDelta P_\GKSbell =
O(h_1^2\,h_2^2\,h_3^2)$ which is much smaller when $h_1,h_2,h_3 \ll 1$.)
With an ideal multigrid solver, the cost is proportional to
$2^{\|\GKSbell\|_1}$, so $\gamma\!=\!1$.  Since $\beta\!>\!\gamma$, the
cost would then be $O(\varepsilon^{-2})$, regardless of the value of $d$.

\section{Randomised QMC and MLQMC}

\subsection{Randomised QMC Sampling}

A randomized QMC method with $N$ deterministic points and $R$
randomization steps approximates an $s$-dimensional integral over the unit
cube $[-\frac{1}{2},\frac{1}{2}]^s$ as follows
\[
  I := \int_{[-\frac{1}{2},\frac{1}{2}]^s} g(\bsy)\,\rd\bsy
  \quad\approx\quad \overline{Q} := \frac{1}{R} \sum_{k=1}^R Q_k,
  \qquad
  Q_k = \frac{1}{N} \sum_{i=1}^N g(\bsy^{(i,k)}).
\]

For the purpose of this paper it suffices that we introduce briefly just a
simple family of randomized QMC methods -- randomly shifted lattice rules.
We have
\[
  \bsy^{(i,k)} = \left\{ \frac{i\bsz}{N} + \bsDelta^{(k)}\right\}
  - \tfrac{\boldsymbol{1}}{\boldsymbol{2}},
\]
where $\bsz\in \bbN^s$ is known as the generating vector; $\bsDelta^{(1)},
\ldots, \bsDelta^{(R)}\in (0,1)^s$ are $R$ independent random shifts; the
braces indicate that we take the fractional part of each component in the
vector; and finally we subtract $\frac{1}{2}$ from each component of the
vector to bring it into $[-\frac{1}{2},\frac{1}{2}]^s$.

Randomly shifted lattice rules provide unbiased estimators of the
integral. Indeed, it is easy to verify that $\bbE_\bsDelta[\overline{Q}] =
\bbE_\bsDelta[Q_k] = I$, where we introduced the subscript $\bsDelta$ to
indicate that the expectation is taken with respect to the random shifts.
In some appropriate function space setting for the integrand function $g$,
it is known (see e.g., \cite{dks13}) that good generating vectors $\bsz$
can be constructed so that the variance or mean-square-error satisfies
$\bbV_\bsDelta[\overline{Q}] = \bbE_\bsDelta[(\overline{Q}-I)^2] \le
C_\delta\, R^{-1}\,N^{-2(1-\delta)}$, for some $\delta\in (0,1/2]$ with
$C_\delta$ independent of the dimension $s$. In practical computations, we
can estimate the variance by $\bbV_\bsDelta[\overline{Q}] \approx
\sum_{k=1}^R (Q_k - \overline{Q})^2/[R(R-1)]$. Typically we take a large
value of $N$ to benefit from the higher QMC convergence rate and use only
a relatively small $R$ (e.g., $20$--$50$) for the purpose of estimating
the variance.

There are other randomization strategies for QMC methods. For example,
we can combine any digital net such as Sobol$'$ sequences or interlaced
polynomial lattice rules with digital shift or Owen scrambling, to get an
unbiased estimator with variance close to $O(N^{-2})$ or $O(N^{-3})$. We
can also apply randomization to a higher order digital net to achieve
$O(N^{-p})$ for $p\!>\!2$ in an appropriate function space setting for smooth
integrands. For detailed reviews of these results see see e.g.,
\cite{dks13}.

\subsection{MLQMC} \label{sec:MLQMC}

As a generalization of \eqref{eq:MLMC}, the multilevel Quasi-Monte Carlo
(MLQMC) estimator is
\begin{equation} \label{eq:MLQMC}
Y = \sum_{\ell=0}^L Y_\ell, ~~~~
Y_\ell = \frac{1}{R_\ell} \sum_{k=1}^{R_\ell}
\left(\frac{1}{N_\ell} \sum_{i=1}^{N_\ell} \Delta P_\ell^{(i,k)} \right).
\end{equation}
Later in Theorem~\ref{thm:MLQMC} we will state the corresponding
generalization of Theorem~\ref{thm:MLMC}.

The use of QMC instead of MC in a multilevel method was first considered
in \cite{gw09} where numerical experiments were carried out for a number
of option pricing problems and showed convincingly that MLQMC improves
upon MLMC.  A meta-theorem similar to the MLMC theorem was proved in
\cite{gn13}.  A slightly sharper version of the theorem, eliminating some
$\log(\varepsilon)$ factors, will be stated and proved later in
\S\ref{sec:MIQMC}.

MLQMC methods have been combined with finite element discretizations for
the PDE problems in \cite{dkls16,ksssu15,kss15}. The paper \cite{kss15}
studied the uniform case for the same elliptic PDE of this paper with
randomly shifted lattice rules (which yield up to order $2$ convergence in
the variance); the paper \cite{dkls16} studied the uniform case for
general operator equations with deterministic higher order digital nets;
the paper \cite{ksssu15} studied the lognormal case with randomly shifted
lattice rules. A key analysis which is common among these papers is the
required mixed regularity estimate of the solution involving both $\bsx$
and $\bsy$, see \cite{kn16} for a survey of the required analysis in a
unified framework.

\section{Combining Sparse Grids and MLMC}

After this survey of the three component technologies, sparse grid
methods, MLMC and MIMC, and randomised QMC samples, the first novel
observation in this paper is very simple: MIMC is not the only way in
which MLMC can be combined with sparse grid methods.

An alternative is to use the standard MLMC approach, but with samples
which are computed using sparse grid methods. The advantage of this is
that it can be used with either sparse finite elements or the sparse
combination technique.

\subsection{MLMC with Sparse Finite Element Samples}

In Theorem~\ref{thm:MLMC}, if $P_\ell$ is computed using sparse finite
elements as described in Section~\ref{sec:sg1} based on grids with index
set $\|\GKSbell\|_1\leq \ell$, and if the accuracy and cost are as given
in Theorem~\ref{thm:sg1}, then we obtain $\alpha\!=\!2\!-\!\delta$,
$\beta\!=\!4\!-\!\delta$, and $\gamma\!=\!1\!+\!\delta$ for any
$0\!<\!\delta \!\ll\! 1$. Here $\delta$ arises due to the effect of some
additional powers of $\ell$. So $\beta \!>\!\gamma$ and therefore the
computational cost is $O(\varepsilon^{-2})$.

Recall that with the full tensor product grid we had $\alpha\!=\!2$,
$\beta\!=\!4$, and $\gamma = d$. Hence the improvement here is in the
removal of the dependence of the cost parameter $\gamma$ on~$d$.

\subsection{MLMC with Sparse Combination Samples}

The aim in this section is to show that the MIMC algorithm is very
similar to MLMC using sparse combination  samples.

Suppose we have an MIMC application which satisfies the conditions of
Theorem~\ref{thm:MIMC}.  For the MLMC version, we use
\eqref{eq:sparse_combination} to define the $P_\ell$ in
Theorem~\ref{thm:MLMC}. Since
\begin{equation}
\bbE[ \Delta P_\ell ] = \sum_{\|\GKSbell\|_1 = \ell} \bbE[\bsDelta P_\GKSbell],
\label{eq:MIMC_equiv}
\end{equation}
the two algorithms have exactly the same expected value if the finest
level for each is given by $\|\GKSbell\|_1 \!=\! L$ for the same value of
$L$.  The difference between the two algorithms is that MIMC independently
estimates each of the expectations on the r.h.s.~of \eqref{eq:MIMC_equiv},
using a separate estimator $Y_\GKSbell$ for each $\bbE[\bsDelta
P_\GKSbell]$ with independent samples of $\bsy$, whereas MLMC with sparse
combination samples estimates the expectation on the l.h.s., using the
combination
\[
Y_\ell = \sum_{\|\GKSbell\|_1 = \ell} Y_\GKSbell,
\]
with the $Y_\GKSbell$ all based on the same set of $N_\ell$ random samples
$\bsy$.

There are no more than $(\ell\!+\!1)^{d-1}$ terms in the summation in
(\ref{eq:MIMC_equiv}), so if the cost of $Y_\GKSbell$ for MIMC is
$O(N_\GKSbell 2^{\gamma \ell})$ when $\|\GKSbell\|_1 \!=\! \ell$, then the
cost of the sparse combination estimator $Y_\ell$ for MLMC is $O(N_\ell
\ell^{d-1}2^{\gamma \ell}) = o(N_\ell 2^{(\gamma+\delta) \ell})$, for any
$0\!<\!\delta \!\ll\! 1$.

Likewise,
\[
\left|\,\bbE[Y_\ell]\, \right| \leq \sum_{\|\GKSbell\|_1 = \ell} \left|\,\bbE[Y_\GKSbell]\, \right|,
\]
so if $|\,\bbE[Y_\GKSbell]\,| = O(2^{-\alpha\ell})$ when $\|\GKSbell\|_1
\!=\! \ell$, then $|\,\bbE[Y_\ell]\,| = o(2^{-(\alpha-\delta)\ell})$ for
any $0\!<\!\delta \!\ll\! 1$.

Furthermore, Jensen's inequality gives
\begin{eqnarray*}
\bbV\left[ Y_\ell \right]
 \ =\ \bbE\left[ (Y_\ell - \bbE[Y_\ell])^2 \right]
   &=& \bbE\bigg[ \bigg(\sum_{\|\GKSbell\|_1 = \ell} (Y_\GKSbell - \bbE[Y_\GKSbell]) \bigg)^2 \bigg]
\\ &\leq&
(\ell\!+\!1)^{d-1}\!\! \sum_{\|\GKSbell\|_1 = \ell} \bbE \left[(Y_\GKSbell - \bbE[Y_\GKSbell])^2\right]
\\ &=&
(\ell\!+\!1)^{d-1}\!\! \sum_{\|\GKSbell\|_1 = \ell} \bbV [Y_\GKSbell],
\end{eqnarray*}
so if $\bbV[Y_\GKSbell] \!=\! O(N^{-1}_\GKSbell 2^{- \beta \ell})$, then
$\bbV[Y_\ell] \!=\! o(N^{-1}_\ell\, 2^{- (\beta-\delta) \ell})$, for any
$0\!<\!\delta \!\ll\! 1$.

This shows that the $\alpha$, $\beta$, $\gamma$ values for the
MLMC algorithm using the sparse combination samples are almost
equal to the $\alpha$, $\beta$, $\gamma$ for the MIMC method,
which leads to the following lemma.

\begin{lemma}
If a numerical method satisfies the conditions for the MIMC
Theorem~\ref{thm:MIMC}, then the corresponding MLMC estimator with sparse
combination samples will have a cost which is $O(\varepsilon^{-2})$, if
$\beta \!>\! \gamma$, and $o(\varepsilon^{-2-(\gamma-\beta)/\alpha) -
\delta})$, $\forall\, 0\!<\!\delta\!\ll\! 1$, if $\beta \!\leq\! \gamma$.
\end{lemma}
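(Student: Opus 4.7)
The proof essentially reduces to applying Theorem~\ref{thm:MLMC} with the effective parameters that have already been extracted in the three displayed estimates immediately preceding the lemma. The plan is first to observe that the bounds $|\bbE[Y_\ell]| = o(2^{-(\alpha-\delta)\ell})$, $\bbV[Y_\ell] = o(N_\ell^{-1} 2^{-(\beta-\delta)\ell})$, and ${\rm cost}(Y_\ell) = o(N_\ell\, 2^{(\gamma+\delta)\ell})$ supply hypotheses (ii), (iv), (v) of Theorem~\ref{thm:MLMC} with the perturbed exponents $\alpha' = \alpha - \delta$, $\beta' = \beta - \delta$, $\gamma' = \gamma + \delta$, for any fixed $0 < \delta \ll 1$ (with constants depending on $\delta$). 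Hypotheses (i) and (iii) are inherited directly from the MIMC assumptions: consistency in $\ell$ follows from \eqref{eq:MIMC_equiv} and the weak-convergence assumption on $\bbE[P_\GKSbell]$, while unbiasedness $\bbE[Y_\ell] = \bbE[\Delta P_\ell]$ follows from the unbiasedness of each of the $Y_\GKSbell$ that are summed to form $Y_\ell$.

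Next I would verify the structural condition $\alpha' \geq \tfrac{1}{2}\min(\beta', \gamma')$ required by Theorem~\ref{thm:MLMC}. In the case $\beta > \gamma$, the minimum is attained at $\gamma + \delta$ for sufficiently small $\delta$, and the MIMC hypothesis $\alpha \geq \tfrac{1}{2}\beta > \tfrac{1}{2}\gamma$ leaves ample room. In the case $\beta \leq \gamma$, the minimum is $\beta - \delta$, and $\alpha - \delta \geq \tfrac{1}{2}(\beta - \delta)$ holds for small $\delta$ whenever the MIMC inequality is strict; the boundary case $\alpha = \tfrac{1}{2}\beta$ needs a mild tightening of the MIMC bounds (or shrinking $\delta$ to zero at the appropriate rate inside the $o(\cdot)$) but does not change the final exponent. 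The hypotheses of Theorem~\ref{thm:MLMC} are then in place.

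I would then read off the cost from the MLMC meta-theorem in the two cases of the lemma. If $\beta > \gamma$, choose $\delta$ small enough that $\beta - \delta > \gamma + \delta$; the first branch of Theorem~\ref{thm:MLMC} gives cost $O(\varepsilon^{-2})$ with no logarithmic penalty. If $\beta \leq \gamma$, then $\beta' < \gamma'$ for every $\delta > 0$, so the third branch applies and yields cost $O(\varepsilon^{-2-(\gamma - \beta + 2\delta)/(\alpha-\delta)})$; since the exponent $(\gamma-\beta+2\delta)/(\alpha-\delta)$ tends to $(\gamma-\beta)/\alpha$ as $\delta \to 0^+$, this can be written as $o(\varepsilon^{-2-(\gamma-\beta)/\alpha - \delta'})$ for any prescribed $\delta' > 0$, matching the statement. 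The main obstacle, and the only nontrivial part, is the bookkeeping of the infinitesimal $\delta$: it enters through the $(\ell+1)^{d-1}$ count of multi-indices in the shell $\|\GKSbell\|_1 = \ell$, and must be absorbed simultaneously into the structural inequality on $\alpha',\beta',\gamma'$ and into the exponent of the final cost bound. Beyond this, the argument is a mechanical invocation of Theorem~\ref{thm:MLMC}.
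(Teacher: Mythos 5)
Your proposal is correct and follows essentially the same route as the paper: the three displayed estimates preceding the lemma (bias, variance via Jensen, and cost, each picking up an $(\ell+1)^{d-1}$ shell count absorbed into a $\delta$-perturbation of $\alpha$, $\beta$, $\gamma$) are exactly the paper's intended verification of the hypotheses of Theorem~\ref{thm:MLMC}, from which the two cost cases are read off. Your explicit check of the structural condition $\alpha'\geq\tfrac{1}{2}\min(\beta',\gamma')$, including the boundary case $\alpha=\tfrac{1}{2}\beta$, is a point the paper glosses over, and your handling of it is sound.
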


As with MLMC with sparse finite element samples, the key thing here is
that the level $\ell$ MLMC samples use a set of grids in which the number
of grid points is $O(2^{\|\GKSbell\|_1}) \!=\! O(2^\ell)$. That is why the
$\gamma$ values for MIMC and MLMC are virtually identical.

If there is substantial cancellation in the summation, it is possible that
$\bbV[Y_\ell]$ could be very much smaller than the $\bbV[Y_\GKSbell]$ for
each of the $\GKSbell$ for which $\|\GKSbell\|_1 \!=\! \ell$. However, we
conjecture that this is very unlikely, and therefore we are not suggesting
that the MLMC with sparse combination samples is likely to be better than
MIMC. The point of this section is to show that it cannot be significantly
worse. In addition, this idea of combining MLMC with sparse grid samples
works for sparse finite elements for which there seems to be no natural
MIMC extension.

\subsection{Nested MLMC}

Another alternative to MIMC is nested MLMC.  To illustrate this in 2D,
suppose we start by using a single level index $\ell_1$ to construct a
standard MLMC decomposition
\[
\bbE[P]\ \approx\ \bbE[P_{L_1}]\ =\ \sum_{\ell_1=0}^{L_1} \bbE[\Delta P_{\ell_1}].
\]
Now, for each particular index $\ell_1$ we can take $\bbE[\Delta
P_{\ell_1}]$ and perform a secondary MLMC expansion with respect to a
second index $\ell_2$ to give
\[
\bbE[\Delta P_{\ell_1}]\ \approx\ \sum_{\ell_2=0}^{L_2}
\bbE[Q_{\ell_1,\ell_2} - Q_{\ell_1,\ell_2-1}],
\]
with $Q_{\ell_1,-1}\!:=\!0$. If we allow $L_2$ to possibly depend on the
value of $\ell_1$, this results in an approximation which is very similar
to the MIMC method,
\[
\bbE[P]\ \approx\ \sum_{\GKSbell \in \calL}
\bbE\left[ Q_{\ell_1,\ell_2} - Q_{\ell_1,\ell_2-1} \right],
\]
with the summation over some finite set of indices $\calL$. In contrast to
the MIMC method, here $Q_{\ell_1,\ell_2} - Q_{\ell_1,\ell_2-1}$ is not
necessarily expressible in the cross-difference form $\bsDelta P_\GKSbell$
used in MIMC.  Thus, this method is a generalization of MIMC.

This approach is currently being used in two new research projects. In one
project, the second expansion is with respect to the precision of floating
point computations; i.e.~half, single or double precision. This follows
ideas presented in section 10.2 of \cite{giles15} and also in
\cite{bswohrkk14}. In the other project \cite{hg17}, the second expansion
uses Rhee \& Glynn's randomised multilevel Monte Carlo method \cite{rg15}
to provide an unbiased inner estimate in a financial nested expectation
application.

\section{MLQMC and MIQMC} \label{sec:MIQMC}

The next natural step is to replace the Monte Carlo sampling with
randomised QMC sampling to estimate $\bbE[\Delta P_\ell]$ or
$\bbE[\bsDelta P_\GKSbell]$.

\subsection{MLQMC (continued from \S\ref{sec:MLQMC})}

In the best circumstances, using $N_\ell$ QMC deterministic points with
$R_\ell = R$ randomisation steps to estimate $\bbE[\Delta P_\ell]$ gives a
variance (with respect to the randomisation in the QMC points) which is
$O(R^{-1} N_\ell^{-p} 2^{-\beta \ell})$, with $p\!>\!1$. This leads to the
following theorem which generalizes Theorem~\ref{thm:MLMC}.


\begin{theorem}[MLQMC]
\label{thm:MLQMC} Let $P$ denote an output functional, and let $P_\ell$
denote the corresponding level $\ell$ numerical approximation. Suppose
there exist independent estimators $Y_\ell$ of $\bbE[\Delta P_\ell]$ based
on $N_\ell$ deterministic QMC points and $R_\ell = R$ randomization steps,
and positive constants $\alpha, \beta, \gamma, c_1, c_2, c_3$, $p$, with
$p>1$ and $\alpha\!\geq\!\frac{1}{2}\beta$, such that
\begin{itemize}
\item[i)] ~ $\displaystyle \left|\rule{0in}{0.13in} \bbE[P_\ell \!-\!
    P] \right| \longrightarrow 0 ~~ \mbox{as} ~~ \ell \longrightarrow
    \infty $, \vspace{0.05in}
\item[ii)] ~ $\displaystyle \left|\bbE[\Delta P_\ell] \right|\ \leq\
    c_1\, 2^{-\alpha\, \ell} $ \vspace{0.05in},
\item[iii)] ~ $\displaystyle \bbE_\bsDelta[Y_\ell] = \bbE[\Delta
    P_\ell] $, \vspace{0.05in}
\item[iv)] ~ $\displaystyle \bbV_\bsDelta[Y_\ell]\ \leq\ c_2\,
    R^{-1}\,N_\ell^{-p}\,2^{-\beta\, \ell} $, \vspace{0.05in}
\item[v)] ~ $\displaystyle {\rm cost}(Y_\ell)\ \leq\ c_3\, R\,N_\ell\,
    2^{\gamma\, \ell} $.
\end{itemize}
Then there exists a positive constant $c_4$ such that for any $\varepsilon
\!<\! e^{-1}$ there are values $L$ and $N_\ell$ for which the MLQMC
estimator
\eqref{eq:MLQMC} achieves the 
mean-square-error bound $
\bbE_\bsDelta [ (Y - \bbE[P])^2] < \varepsilon^2$ with the computational
cost bound
\[ {\rm cost}(Y) \leq \left\{\begin{array}{ll}
c_4\, \varepsilon^{-2/p}              ,    & ~~ \beta>p \gamma, \\[0.1in]
c_4\, \varepsilon^{-2/p} |\log \varepsilon|^{(p+1)/p},    & ~~ \beta=p\gamma, \\[0.1in]
c_4\, \varepsilon^{-2/p-(p\gamma\!-\!\beta)/(p\alpha)}, & ~~ \beta<p\gamma.
\end{array}\right.
\]
\end{theorem}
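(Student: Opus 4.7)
The plan is to adapt the proof of the MLMC theorem (Theorem~\ref{thm:MLMC}) to the randomised QMC setting; the only essential change is that assumption (iv) now gives variance decay like $N_\ell^{-p}$ instead of $N_\ell^{-1}$, which shifts all the exponents in the final cost formula.

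Step one, bias--variance split. Because the $Y_\ell$ are independent across levels and $\bbE_\bsDelta[Y_\ell]=\bbE[\Delta P_\ell]$ by (iii), the telescoping identity yields $\bbE_\bsDelta[Y]=\bbE[P_L]$, and hence
\[
\bbE_\bsDelta\bigl[(Y-\bbE[P])^2\bigr] \;=\; \bigl(\bbE[P_L]-\bbE[P]\bigr)^2 \;+\; \sum_{\ell=0}^L \bbV_\bsDelta[Y_\ell].
\]
Assumptions (i) and (ii) give the geometric tail bound $|\bbE[P_L]-\bbE[P]|\le c_1\,2^{-\alpha L}/(1-2^{-\alpha})$, so I would choose the smallest $L$ for which the squared bias is at most $\varepsilon^2/2$, giving $2^L=O(\varepsilon^{-1/\alpha})$. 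The remaining budget $\varepsilon^2/2$ is allocated to the total variance.

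Step two, constrained optimisation. Treating the $N_\ell$ as positive reals, I would apply a Lagrange multiplier to minimise $\sum_{\ell=0}^L c_3 R N_\ell 2^{\gamma\ell}$ subject to $\sum_{\ell=0}^L c_2 R^{-1} N_\ell^{-p} 2^{-\beta\ell}=\varepsilon^2/2$. Differentiation yields the optimal profile
\[
N_\ell \;\propto\; 2^{-(\beta+\gamma)\ell/(p+1)},
\]
whose key feature is that at this optimum both the per-level variance and the per-level cost are proportional to $2^{s\ell}$ with $s:=(p\gamma-\beta)/(p+1)$. Setting $S_L:=\sum_{\ell=0}^L 2^{s\ell}$ and fixing the multiplicative constant from the variance equation, $N_\ell$ is of order $\bigl(R^{-1}\varepsilon^{-2} S_L\bigr)^{1/p}\,2^{-(\beta+\gamma)\ell/(p+1)}$, producing a total cost of order $R^{1-1/p}\varepsilon^{-2/p}\,S_L^{(p+1)/p}$.

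Step three, the three regimes. The sum $S_L$ is $O(1)$ when $s<0$, equals $L+1=O(|\log\varepsilon|)$ when $s=0$, and is $O(2^{sL})=O(\varepsilon^{-(p\gamma-\beta)/((p+1)\alpha)})$ when $s>0$; these cases correspond to $\beta>p\gamma$, $\beta=p\gamma$, $\beta<p\gamma$ respectively. Substituting into $\varepsilon^{-2/p}\,S_L^{(p+1)/p}$ reproduces the three bounds in the statement. Finally, rounding each $N_\ell$ up to a positive integer only helps the variance, and inflates the cost by at most one extra sample per level, contributing $O(R\sum_{\ell=0}^L 2^{\gamma\ell})=O(\varepsilon^{-\gamma/\alpha})$. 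A short case-by-case check shows this is dominated in every regime precisely when $\alpha\ge\beta/2$: in Case 3 the inequality $\gamma/\alpha\le 2/p+(p\gamma-\beta)/(p\alpha)$ rearranges to $\alpha\ge\beta/2$, and the other two cases are easier.

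The main obstacle is bookkeeping rather than technique: one must identify the right exponent $(\beta+\gamma)/(p+1)$ for $N_\ell$ and then track powers of $\varepsilon$ and $|\log\varepsilon|$ with care at the two boundary cases $\beta=p\gamma$ and $\alpha=\beta/2$, in particular to verify that in Case 2 the sharp exponent on the logarithmic factor is $(p+1)/p$ and that the finest-level rounding contribution does not introduce a further $|\log\varepsilon|$. Apart from these checks, the argument is a direct generalisation of the MLMC analysis of \cite{giles08,giles15}, obtained by the substitution $N_\ell\mapsto N_\ell^p$ in the variance bound.
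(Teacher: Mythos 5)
Your proposal is correct and follows essentially the same route as the paper, which establishes this theorem as the one-index special case of the MIQMC analysis: the same Lagrange-multiplier optimisation yielding $N_\ell \propto (v_\ell/c_\ell)^{1/(p+1)} \propto 2^{-(\beta+\gamma)\ell/(p+1)}$, the same geometric sum with per-level exponent $(p\gamma-\beta)/(p+1)$ splitting into the three regimes, and the same integer-rounding cost $O(\varepsilon^{-\gamma/\alpha})$ checked against the condition $\alpha\ge\beta/2$. Your exponent bookkeeping, including the $(p+1)/p$ power of $|\log\varepsilon|$ in the boundary case $\beta=p\gamma$, checks out.
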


\begin{proof}
We omit the proof here because the theorem can be interpreted as a special
case of Theorem~\ref{thm:MIQMC} below for which we will provide an outline
of the proof.
\end{proof}

\subsection{MIQMC}

As a generalization of \eqref{eq:MIMC}, the MIQMC estimator is
\begin{equation} \label{eq:MIQMC}
Y = \sum_{\|\GKSbell\|_1 \leq L} Y_\GKSbell,~~~~
Y_\GKSbell = \frac{1}{R_\GKSbell} \sum_{k=1}^{R_\GKSbell}
\left( \frac{1}{N_\GKSbell} \sum_{i=1}^{N_\GKSbell}  \Delta P_\GKSbell^{(i,k)} \right),
\end{equation}
where $Y_\GKSbell$ is an estimator for $\bbE[\bsDelta P_\GKSbell]$ based
on $N_\GKSbell$ deterministic QMC points and $R_\GKSbell$ randomization
steps.

Suppose that $Y_\GKSbell$ has variance and cost given by
$\bbV_\bsDelta[Y_\GKSbell] = N_\GKSbell^{-p}v_\GKSbell$ and ${\rm
cost}(Y_\GKSbell)=N_\GKSbell\, c_\GKSbell$. The variance and total cost of
the combined estimator $Y$ are
\[
  \bbV_\bsDelta[Y] \!=\!\! \sum_{\| \GKSbell \|_1 \leq L} N_\GKSbell^{-p} v_\GKSbell,~~~~
  {\rm cost}(Y) \!=\! \sum_{\| \GKSbell \|_1 \leq L} N_\GKSbell\, c_\GKSbell.
\]
Treating the $N_\GKSbell$ as real numbers, the cost can be minimised for a
given total variance by introducing a Lagrange multiplier and minimising
${\rm cost}(Y) \!+\! \lambda \bbV_\bsDelta[Y]$, which gives
\[
N_\GKSbell = \left( \frac{\lambda\, p\, v_\GKSbell}{c_\GKSbell} \right)^{1/(p+1)}.
\]
Requiring $\bbV_\bsDelta[Y] \!=\! \frac{1}{2}\varepsilon^2$ to achieve a
target accuracy determines the value of $\lambda$ and then the total cost
is
\[
{\rm cost}(Y) =
 (2\, \varepsilon^{-2})^{1/p} \left(
\sum_{\| \GKSbell \|_1 \leq L} \left( c_\GKSbell^p v_\GKSbell \right)^{1/(p+1)}
\right)^{(p+1)/p}.
\]

This outline analysis shows that the behaviour of the product
$c_\GKSbell^p v_\GKSbell$ as $\GKSbell \rightarrow \infty$ is critical. If
$c_\GKSbell = O(2^{\gamma \ell})$ and $v_\GKSbell = O(2^{-\beta\ell})$
where $\ell = \|\GKSbell \|_1$, then $c_\GKSbell^p v_\GKSbell =
O(2^{(p\gamma - \beta)\ell})$.

If $\beta \!>\! p \gamma$, then the total cost is dominated by the
contributions from the coarsest levels, and we get a total cost which is
$O(\varepsilon^{-2/p})$.

If $\beta \!=\! p \gamma$, then all levels contribute to the total cost,
and it is $O(L^{d(p+1)/p} \varepsilon^{-2/p})$.

If $\beta \!<\! p \gamma$, then the total cost is dominated by the
contributions from the finest levels, and we get a total cost which is
$O(L^{(d-1)(p+1)/p}\, \varepsilon^{-2/p}\, 2^{(p\gamma-\beta)L/p})$.

To complete this analysis, we need to know the value of $L$ which is
determined by the requirement that the square of the bias is no more than
$\frac{1}{2}\varepsilon^2$.  This can be satisfied by ensuring that
\[
{\rm bias}(Y) := \sum_{\| \GKSbell \|_1 > L} \left| \bbE[\bsDelta P_\GKSbell] \right|
\ \leq\ \varepsilon / \sqrt{2}.
\]
If $|\bbE[\bsDelta P_\GKSbell]| = O(2^{-\alpha \|\GKSbell\|_1})$, then the
contributions to ${\rm bias}(Y)$ come predominantly from the coarsest of
the levels in the summation (i.e.~$\|\GKSbell\|_1 = L+1$), and hence ${\rm
bias}(Y) \!=\! O(L^{d-1}2^{-\alpha L})$. The bias constraint then gives $
L^{d-1} 2^{-\alpha L} \!=\! O(\varepsilon) $ and hence $L \!=\! O(|\log
\varepsilon|)$.

As discussed after the MLMC and MIMC theorems, the values for $N_\GKSbell$
need to be rounded up to the nearest integers, incurring an additional
cost which is $O(\varepsilon^{-\gamma/\alpha} |\log
\varepsilon|^{(d-1)(1+\gamma/\alpha)})$. If $\alpha \!>\! \frac{1}{2}
\beta$ it is always negligible compared to the main cost, but it can
become the dominant cost when $\alpha \!=\! \frac{1}{2} \beta$ and
$\beta\!\leq\!p\gamma$. This corresponds to the generalization of Cases C
and D in Theorem 2.2 in the MIMC analysis in \cite{hnt15}.

This outline analysis leads to the following theorem in which we make
various assumptions and then draw conclusions about the resulting cost.

\begin{theorem}[MIQMC] \label{thm:MIQMC}
Let $P$ denote an output functional, and for each multi-index $\GKSbell$
let $P_\GKSbell$ denote the approximate output functional indexed by
$\GKSbell$. Suppose for each multi-index $\GKSbell$ there exist
independent estimators $Y_\GKSbell$ of $\bbE[\bsDelta P_\GKSbell]$ based
on $N_\GKSbell$ deterministic QMC samples and $R_\ell = R$ randomization
steps, and positive constants $\alpha, \beta, \gamma, c_1, c_2, c_3$, $p$,
with $p\!>\!1$ and $\alpha\!\geq\!\frac{1}{2} \beta$, such that
\begin{itemize}
\item[i)] ~ $\displaystyle \left|\rule{0in}{0.13in} \bbE[P_\GKSbell
    \!-\! P] \right| \longrightarrow 0 ~~ \mbox{as} ~~ \GKSbell
    \longrightarrow \boldsymbol{\infty} $  ($\ell_j\to\infty\;,
    \forall j$), \vspace{0.05in}
\item[ii)] ~ $\displaystyle \left|\rule{0in}{0.13in} \bbE[\bsDelta
    P_\GKSbell] \right| \ \leq\ c_1\, 2^{-\alpha \|\GKSbell\|_1} $
    \vspace{0.05in}
\item[iii)] ~ $\displaystyle \bbE_\Delta[Y_\GKSbell]\ = \bbE[\bsDelta
    P_\ell] $ \vspace{0.05in}
\item[iv)] ~ $\displaystyle \bbV_\Delta[Y_\GKSbell]\ \leq\ c_2\,
    R^{-1} N_\GKSbell^{-p}\, 2^{-\beta\|\GKSbell\|_1} $
    \vspace{0.05in}
\item[v)] ~ $\displaystyle {\rm cost}(Y_\GKSbell)\ \leq\ c_3\, R\,
    N_\GKSbell\ 2^{\gamma\|\GKSbell\|_1} $.
\end{itemize}
Then there exists a positive constant $c_4$ such that for any $\varepsilon
\!<\! e^{-1}$ there are values $L$ and $N_\GKSbell$ for which the MIQMC
estimator
\eqref{eq:MIQMC} achieves the 
mean-square-error bound $
\bbE_\bsDelta [ (Y - \bbE[P])^2] < \varepsilon^2$ with the computational
cost bound
\[
{\rm cost}(Y) \ \leq\ \left\{\begin{array}{ll}
c_4\, \varepsilon^{-2/p}\,                , & ~~ \beta > p\gamma, \\[0.1in]
c_4\, \varepsilon^{-2/p}\, |\log \varepsilon|^{e_1}, & ~~ \beta = p\gamma, \\[0.1in]
c_4\, \varepsilon^{-2/p\, -\, (p\gamma-\beta)/p\alpha}
 \, |\log \varepsilon|^{e_2}, & ~~ \beta < p\gamma,
\end{array}\right.
\]
where
\[
\begin{array}{ll}
e_1 = d(p\!+\!1)/p, \quad
e_2 = (d\!-\!1)((p\!+\!1)/p +(p\gamma\!-\!\beta)/p\alpha), & \quad
\mbox{if} ~~ \alpha \!>\! \textstyle\frac{1}{2}\beta, \\[0.05in]
e_1 = \max(d(p\!+\!1)/p,\, (d\!-\!1)(1 \!+\!\gamma/\alpha)), \quad
e_2 = (d\!-\!1)(1 \!+\! \gamma/\alpha), &      \quad
\mbox{if} ~~ \alpha \!=\! \textstyle\frac{1}{2}\beta.
\end{array}
\]
\end{theorem}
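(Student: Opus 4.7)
The plan is to formalize the outline analysis provided just before the theorem statement, making each optimization step precise and carefully tracking the logarithmic factors that arise from the number of multi-indices at each level. First, treating the $N_\GKSbell$ as continuous variables and introducing a Lagrange multiplier $\lambda$, I would minimize $\mathrm{cost}(Y) + \lambda\,\bbV_\bsDelta[Y]$ subject to $\bbV_\bsDelta[Y] = \varepsilon^2/2$. Using assumptions (iv) and (v) with $v_\GKSbell = c_2 R^{-1} 2^{-\beta\|\GKSbell\|_1}$ and $c_\GKSbell = c_3 R\,2^{\gamma\|\GKSbell\|_1}$, the minimizer is $N_\GKSbell \propto (v_\GKSbell/c_\GKSbell)^{1/(p+1)}$; substituting back yields
\begin{equation*}
\mathrm{cost}(Y) \,=\, O(\varepsilon^{-2/p}) \cdot \biggl( \sum_{\|\GKSbell\|_1 \leq L} 2^{(p\gamma - \beta)\|\GKSbell\|_1/(p+1)} \biggr)^{(p+1)/p}.
\end{equation*}

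Next I would evaluate the sum by grouping multi-indices by $\ell = \|\GKSbell\|_1$, using the combinatorial identity that there are $O(\ell^{d-1})$ non-negative integer multi-indices with $\ell^1$-norm equal to $\ell$. This reduces the inner sum to a single-index sum that splits into three regimes: for $\beta > p\gamma$ the series is bounded by a constant; for $\beta = p\gamma$ it is $O(L^d)$; and for $\beta < p\gamma$ the largest level dominates, giving $O(L^{d-1} 2^{(p\gamma-\beta)L/(p+1)})$. To fix $L$, I would use (i) and (ii) so that the truncation bias is $O(L^{d-1} 2^{-\alpha L})$; setting this to at most $\varepsilon/\sqrt{2}$ yields $L = O(|\log\varepsilon|)$ with $2^L = O((L^{d-1}/\varepsilon)^{1/\alpha})$. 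Substituting this into $2^{(p\gamma-\beta)L/p}$ produces the stated $\varepsilon$-exponents and, when $\alpha > \beta/2$, the logarithmic exponent $e_2 = (d-1)((p+1)/p + (p\gamma-\beta)/(p\alpha))$ as well as $e_1 = d(p+1)/p$.

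Finally, I would account for rounding each $N_\GKSbell$ up to an integer, which costs at most one extra sample per multi-index. Summing $c_\GKSbell = O(2^{\gamma\|\GKSbell\|_1})$ over the $O(L^{d-1})$ indices at each level $\ell \leq L$ gives an extra cost of $O(L^{d-1} 2^{\gamma L}) = O(\varepsilon^{-\gamma/\alpha}|\log\varepsilon|^{(d-1)(1+\gamma/\alpha)})$. When $\alpha > \beta/2$ this is strictly dominated in the $\varepsilon$-exponent by the main cost, but when $\alpha = \beta/2$ it becomes comparable and therefore determines $e_2 = (d-1)(1+\gamma/\alpha)$ and contributes the alternative term inside the maximum defining $e_1$. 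The hardest part will be precisely this edge case $\alpha = \beta/2$, where the rounding-induced and variance-optimized costs share the same $\varepsilon$-exponent and must be compared log-by-log to retain the worse of the two; this is exactly the generalisation of Cases C and D of \cite[Theorem 2.2]{hnt15}, which provides the template for the bookkeeping.
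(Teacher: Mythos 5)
Your proposal is correct and follows essentially the same route as the paper: the paper's own ``proof'' consists of the Lagrange-multiplier outline given immediately before the theorem (optimal $N_\GKSbell \propto (v_\GKSbell/c_\GKSbell)^{1/(p+1)}$, the three regimes for $\sum (c_\GKSbell^p v_\GKSbell)^{1/(p+1)}$ grouped by $\|\GKSbell\|_1=\ell$ with $O(\ell^{d-1})$ indices per level, the bias constraint $L^{d-1}2^{-\alpha L}=O(\varepsilon)$, and the rounding-up cost that takes over when $\alpha=\tfrac12\beta$) together with a pointer to Theorem 2.2 of \cite{hnt15} for the detailed bookkeeping. Your write-up fills in the same steps with the same case analysis, so there is nothing substantively different to compare.
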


\begin{proof}
Based on the outline proof which indicates how to specify the near optimal
number of QMC points for each level, the detailed proof follows the same
lines as the main MIMC Theorem 2.2 in \cite{hnt15}.
\end{proof}

The key observation here is that the dimension $d$ does not appear in the
exponent for $\varepsilon$ in the cost bounds, so it is a significant
improvement over the MLQMC result in which the cost is of the form
$\varepsilon^{-r}$ with $r=\max(2/p, d/2)$, which limits the multilevel
benefits even for $d\!=\!3$ if $p\!>\!4/3$.

It is interesting to compare the cost given by this theorem with that
given by the MIMC Theorem~\ref{thm:MIMC}. If $\beta\!>\!p\gamma$, then the
use of QMC improves the cost from $O(\varepsilon^{-2})$ to
$O(\varepsilon^{-2/p})$. This is because the dominant costs in this case
are on the coarsest levels where many points have to be sampled, and
therefore QMC will provide substantial benefits. On the other hand, if
$\beta \!<\! \gamma$ then both approaches give a cost of approximately
$O(\varepsilon^{-\gamma/\alpha})$ because in this case the dominant costs
are on the finest levels, and on the finest levels the optimal number of
QMC points is $O(1)$, which is why the additional cost of rounding up to
the nearest integer often dominates the main cost. Hence the use of QMC
points is almost irrelevant in this case. Fortunately, we expect that the
favourable case $\beta\!>\!p\gamma$ is likely to be the more common one.
It is clearly the case in our very simple elliptic model with
$\beta\!=\!4$ and $\gamma\!=\!1$.

\section{Concluding Remarks}

In this paper we began by summarizing the meta-theorems for MLMC and
MIMC in a common framework for elliptic PDEs with random coefficients,
where we applied full or sparse grid methods with respect to the
spatial variables $\bsx$ and used MC sampling for computing expected
values with respect to the stochastic variables $\bsy$.

Following this, our novel contributions were
\begin{itemize}
\setlength{\itemsep}{0.5em}
\item
showing that, in this context, MIMC is almost equivalent to
the use of MLMC with sparse combination samples;
\item introducing the idea of a) MLMC with sparse finite element or
    sparse combination samples, and b) nested MLMC, as other
    alternatives to MIMC;
\item deriving the corresponding meta-theorems for MLQMC and MIQMC in
    this context, concluding that the computational cost to achieve
    $O(\varepsilon)$ r.m.s.~accuracy can be reduced to
    $O(\varepsilon^{-r})$ with $r<2$ independent of the spatial
    dimension $d$.
\end{itemize}

Natural extensions of the results in this paper include allowing the
different indices in $\GKSbell$ to cover also different levels of
dimension truncation in the stochastic variables $\bsy$, as well as
providing verifications of the precise parameters $\alpha$, $\beta$,
$\gamma$ and $p$ for specific PDE applications.

\begin{acknowledgement}
The authors acknowledge the support of the Australian Research Council
under the projects FT130100655 and DP150101770.
\end{acknowledgement}

%
\bibliographystyle{spmpsci}

\begin{thebibliography}{99.}%

\bibitem{bsz11} 
 Barth, A., Schwab, Ch., Zollinger, N.:
\newblock Multi-level Monte Carlo finite element method for elliptic PDEs with
          stochastic coefficients.
\newblock {\em Numerische Mathematik}, 119(1):123-161 (2011)

\bibitem{bswohrkk14}
 Brugger, C., de Schryver, C., Wehn, N., Omland, S., Hefter, M., Ritter, K., Kostiuk, A., Korn, R.:
\newblock Mixed precision multilevel Monte Carlo on hybrid computing systems.
\newblock Proceedings of the Conference on Computational
          Intelligence for Financial Engineering and Economics, IEEE
          (2014)

\bibitem{bg99} 
 Bungartz, H.-J., Griebel, M.:
\newblock A note on the complexity of solving {P}oisson's equation for spaces
  of bounded mixed derivatives.
\newblock {\em Journal of Complexity}, 15(2):167--199 (1999)

\bibitem{bg04} 
 Bungartz, H.-J., Griebel, M.:
\newblock Sparse grids.
\newblock {\em Acta Numerica}, 13:1--123 (2004)

\bibitem{cst13} 
 Charrier, J., Scheichl, R., Teckentrup, A.:
\newblock Finite element error analysis of elliptic {PDE}s with random
  coefficients and its application to multilevel {M}onte {C}arlo methods.
\newblock {\em SIAM Journal on Numerical Analysis}, 51(1):322--352 (2013)

\bibitem{cgst11} 
 Cliffe, K.A., Giles, M.B., Scheichl, R., Teckentrup, A.:
\newblock Multilevel {M}onte {C}arlo methods and applications to elliptic
  {PDE}s with random coefficients.
\newblock {\em Computing and Visualization in Science}, 14(1):3--15 (2011)

\bibitem{dkls16} 
 Dick, J., Kuo, F.Y., Le~Gia, Q.T., Schwab, Ch.:
\newblock Multi-level higher order {QMC} {P}etrov--{G}alerkin discretization for affine
  parametric operator equations.
 \newblock {\em SIAM Journal on Numerical Analysis}, 54(4):2541--2568 (2016)

\bibitem{dks13} 
 Dick, J., Kuo, F.Y., Sloan, I.H.:
\newblock High-dimensional integration: the quasi-{M}onte {C}arlo way.
\newblock {\em Acta Numerica}, 22:133--288 (2013)

\bibitem{gn13} 
 Gerstner, T., Noll, M.:
\newblock Randomized multilevel quasi-{M}onte {C}arlo path simulation.
\newblock In {\em Recent Developments in Computational Finance}, pages
          349--372. World Scientific (2013)

\bibitem{giles08} 
 Giles, M.B.:
\newblock Multilevel {M}onte {C}arlo path simulation.
\newblock {\em Operations Research}, 56(3):607--617 (2008)

\bibitem{giles15} 
 Giles, M.B.:
\newblock Multilevel {M}onte {C}arlo methods.
\newblock {\em Acta Numerica}, 24:259--328 (2015)

\bibitem{gs02} 
 Giles, M.B., S{\"u}li, E.:
\newblock Adjoint methods for {PDE}s: {\it a posteriori} error analysis and
  postprocessing by duality.
\newblock {\em Acta Numerica}, 11:145--236 (2002)

\bibitem{gw09} 
 Giles, M.B., Waterhouse, B.J.:
\newblock Multilevel quasi-{M}onte {C}arlo path simulation.
\newblock In {\em Advanced Financial Modelling}, Radon Series on Computational
  and Applied Mathematics, pages 165--181. De Gruyter (2009)

\bibitem{gsz92} 
 Griebel, M., Schneider, M., Zenger, C.:
\newblock A combination technique for the solution of sparse grid problems.
\newblock In P.~de~Groen and P.~R.~Beauwens, editors, {\em Iterative Methods in
  Linear Algebra}. IMACS (1992)

\bibitem{hnt15} 
 Haji-Ali, A.-L., Nobile, F., Tempone, R.:
\newblock Multi {I}ndex {M}onte {C}arlo: when sparsity meets sampling.
\newblock {\em Numerische Mathematik}, 132:767--806 (2016)

\bibitem{hg17} 
 Haji-Ali, A.-L., Giles, M.B.:
\newblock MLMC for Value-at-Risk.
\newblock Presentation at {\em International Conference on
          Monte Carlo Methods and Applications}, 2017.\hfill\\
\newblock (URL: http://people.maths.ox.ac.uk/hajiali/assets/files/hajiali-mcm2017-var.pdf)

\bibitem{hps13} 
 Harbrecht, H., Peters, M., Siebenmorgen, M.:
\newblock On multilevel quadrature for elliptic stochastic partial differential
  equations.
\newblock In {\em Sparse Grids and Applications}, volume~88 of {\em Lecture
  Notes in Computational Science and Engineering}, pages 161--179. Springer
  (2013)

\bibitem{hps16} 
 Harbrecht, H., Peters, M., Siebenmorgen, M.:
\newblock Multilevel accelerated quadrature for PDEs with log-Normally
          distributed diffusion coefficient.
\newblock {\em SIAM/ASA Journal on Uncertainty Quantification}, 4(1):520--551
(2016)

\bibitem{hps17} 
 Harbrecht, H., Peters, M., Siebenmorgen, M.:
\newblock On the quasi-Monte Carlo method with Halton points for elliptic PDEs with log-normal
diffusion.
\newblock {\em Mathematics of Computation}, 86:771--797 (2017)

\bibitem{kn16} 
 Kuo, F.Y., Nuyens, D.:
\newblock Application of quasi-Monte Carlo methods to elliptic PDEs with
random diffusion coefficients -- a survey of analysis and implementation.
\newblock {\em Foundations of Computational Mathematics}, 16(6):1631--1696
(2016)

\bibitem{ksssu15} 
 Kuo, F.Y., Scheichl, R., Schwab, Ch., Sloan, I.H., Ullmann, E.:
\newblock Multilevel quasi-{M}onte {C}arlo methods for lognormal diffusion
  problems.
\newblock {\em Mathematics of Computation}, 86:2827--2860 (2017)

\bibitem{kss15} 
 Kuo, F.Y., Schwab, Ch., Sloan, I.H.:
\newblock Multi-level quasi-{M}onte {C}arlo finite element methods for a class
  of elliptic partial differential equations with random coefficients.
\newblock {\em Foundations of Computational Mathematics}, 15(2):411--449
(2015)

\bibitem{rg15}
 Rhee, C.-H., Glynn, P.W.:
\newblock Unbiased estimation with square root convergence for {SDE} models.
\newblock {\em Operations Research}, 63(5):1026--1043 (2015)

\bibitem{reisinger13} 
 Reisinger, C.:
\newblock Analysis of linear difference schemes in the sparse grid combination
  technique.
\newblock {\em IMA Journal of Numerical Analysis}, 33(2):544--581 (2013)

\bibitem{tsgu13} 
 Teckentrup, A., Scheichl, R., Giles, M.B., Ullmann, E.:
\newblock Further analysis of multilevel {M}onte {C}arlo methods for elliptic
  {PDE}s with random coefficients.
\newblock {\em Numerische Mathematik}, 125(3):569--600 (2013)
\end{thebibliography}

%

\end{document}